\theoremstyle{plain}
\newtheorem{thm}[equation]{Theorem}
\newtheorem{defi}[equation]{\it Definition}
\newtheorem{rem}[equation]{\it Remark}
\newtheorem{lem}[equation]{Lemma}
\newtheorem*{Main-Theorem}{Theorem \ref{thm3}}
\keywords{Ricci-harmonic flow, Ricci-harmonic soliton, Harmonic-Einstein, Gap theorem}
\subjclass[2010]{Primary 53C44, Secondary 53C25, 53C20}
\address{Department of Mathematics, Graduate School of Science, Osaka University, 1-1 Machikaneyama, Toyonaka, Osaka 560-0043, JAPAN}
\email{h-tadano@cr.math.sci.osaka-u.ac.jp}
\title{Gap theorems for Ricci-harmonic solitons}
\dedicatory{}
\author{Homare TADANO}
\date{December 20, 2014}
\thanks{This work was supported by Moriyasu Graduate Student Scholarship Foundation}
\begin{document}

\begin{abstract}
In the present paper, by using estimates for the generalized Ricci curvature, we shall give some gap theorems for Ricci-harmonic solitons showing some necessary and sufficient conditions for the solitons to be harmonic-Einstein. Our results may be regarded as a generalization of recent works by H. Li, and M. Fern\'{a}ndez-L\'{o}pez and E. Garc\'{i}a-R\'{i}o.
\end{abstract}

\maketitle

\numberwithin{equation}{section}

\section{Introduction}

The geometric heat flow is one of the most powerful tool in mathematics, and has been studied extensively. Such a study originated from the celebrated work \cite{ES} of the harmonic map heat flow by Eells and Sampson.

\subsection{The harmonic map heat flow}

Let $(\mathcal{M}, g)$ and $(\mathcal{N}, h)$ be two complete Riemannian manifolds of dimension $m$ and $n$, respectively. We assume that $\mathcal{M}$ is compact. In 1964, Eells and Sampson \cite{ES} introduced the \textit{harmonic map heat flow}
\begin{equation}\label{HMF}
\frac{\partial \phi}{\partial t}(t) = \tau_{g} \phi(t), \quad \phi(0) = \phi_{0}
\end{equation}
to find a harmonic map homotopic to a given map $\phi_{0} : (\mathcal{M}, g) \rightarrow (\mathcal{N}, h)$, where $\phi(t) : (\mathcal{M}, g) \rightarrow (\mathcal{N}, h)$ is a family of smooth maps between two manifolds and $\tau_{g} \phi(t) := \mathrm{trace} \nabla d \phi(t)$ denotes the tension field of $\phi(t)$ with respect to the metric $g$. They solved the problem when the target manifold $(\mathcal{N}, h)$ has non-positive sectional curvature. We refer the reader to the survey articles \cite{EL1, EL2} for basic facts about the harmonic map.

\medskip

Inspired by the idea of Eells and Sampson \cite{ES}, Hamilton \cite{Hamilton} introduced the Ricci flow.

\subsection{The Ricci flow}

Let $(\mathcal{M}, g(t))$ be a family of compact $m$-dimensional Riemannian manifolds with Riemannian metrics $g(t)$ evolving by the following \textit{Ricci flow}:
\begin{equation}\label{RF}
\frac{\partial g}{\partial t}(t) = -2 \operatorname{Ric}_{g(t)}, \quad g(0) = g_{0}, 
\end{equation}
where $\operatorname{Ric}_{g(t)}$ denotes the Ricci tensor with respect to the metric $g(t)$ and $g_{0}$ is an initial Riemannian metric on $\mathcal{M}$. In 2002, Perelman \cite{P} proved the Poincar\'{e} conjecture by using the Ricci flow. Recently, the Ricci flow has achieved great success to find canonical metrics on Riemannian manifolds, and has been an important tool in differential geometry. We refer the reader to the book \cite{CLN} for basic facts about the Ricci flow.

\medskip

The above two flows were combined by M\"{u}ller in the following way:

\subsection{The Ricci-harmonic flow}

Let $(\mathcal{M}, g(t))$ be a family of complete $m$-dimensional Riemannian manifolds with Riemannian metrics $g(t)$ evolving by the following \textit{Ricci-harmonic flow}:
\begin{equation}\label{RHF}
\left\{
\begin{aligned}
\frac{\partial g}{\partial t}(t) & = - 2 \operatorname{Ric}_{g(t)} + 2 \alpha(t) \nabla \phi(t) \otimes \nabla \phi(t), \quad g(0) = g_{0}, \\
\frac{\partial \phi}{\partial t}(t) & = \tau_{g(t)} \phi(t), \quad \phi(0) = \phi_{0}, 
\end{aligned}
\right.
\end{equation}
where $\alpha(t) \geqslant 0$ is a non-negative time-dependent coupling constant, $\phi(t) : (\mathcal{M}, g(t)) \rightarrow (\mathcal{N}, h)$ is a family of smooth maps between $(\mathcal{M}, g(t))$ and a fixed complete Riemannian manifold $(\mathcal{N}, h)$, and $\nabla \phi(t) \otimes \nabla \phi(t) := \phi(t)^{*} h$ is the pull-back of the metric $h$ via $\phi(t)$. The flow was introduced by M\"{u}ller \cite{Muller2} to find a harmonic map between two manifolds. The short time existence of the flow is proved. A typical example would be the List's extended Ricci flow \cite{List} introduced by its connection to general relativity that the stationary points correspond to the static Einstein vacuum equations. More examples can be found in \cite{Muller2}.

\medskip

Surprisingly, the Ricci-harmonic flow has better properties than the harmonic map heat flow or the Ricci flow alone. In fact, if the curvature of $(\mathcal{M}, g(t))$ is uniformly bounded, then by taking the coupling constant $\alpha(t)$ suitably, we can control the derivative of the map $\phi(t)$, and the long-time existence for the flow follows. Hence, the Ricci-harmonic flow may be useful to find a harmonic map between two Riemannian manifolds. Furthermore, many authors pointed out that the Ricci-harmonic flow shares good properties with the Ricci flow. For instance, as in the Perelman's work \cite{P}, M\"{u}ller introduced the $\mathcal{F}_{\alpha}$-functional and the $\mathcal{W}_{\alpha}$-functional for the Ricci-harmonic flow and established corresponding theory, such as, the no breather theorem, the non-collapsing theorem \cite{Muller2}, the monotonicity formula \cite{Muller1}. By denoting $\mathscr{S}(t) := \operatorname{Ric}_{g(t)} - \alpha(t) \nabla \phi(t) \otimes \nabla \phi(t)$, the first equation in (\ref{RHF}) can be written as
\[
\frac{\partial g}{\partial t}(t) = - 2 \mathscr{S}(t).
\]
As with the Ricci flow, under (\ref{RHF}), some differential Harnack inequalities for the heat equation are obtained \cite{Guo-He, Zhu}.

\medskip

In the present paper, we focus on the soliton-type solution of the flow (\ref{RHF}).

\medskip

\subsection{The Ricci-harmonic soliton}

Let $(\mathcal{M}, g)$ and $(\mathcal{N}, h)$ be two static complete Riemannian manifolds of dimension $m$ and $n$, respectively. Let $\phi : (\mathcal{M}, g) \rightarrow (\mathcal{N}, h)$ be a smooth map between $(\mathcal{M}, g)$ and $(\mathcal{N}, h)$, $f : \mathcal{M} \rightarrow \mathbb{R}$ a smooth function on $\mathcal{M}$, and $\lambda \in \mathbb{R}$ a real number.

\begin{defi}[Williams \cite{Williams}]\rm
The $4$-tuple $((\mathcal{M}, g), (\mathcal{N}, h), \phi, \lambda)$ is called \textit{harmonic-Einstein} if it satisfies the following coupled system:
\begin{equation}\label{HE}
\left\{
\begin{aligned}
& \operatorname{Ric}_{g} - \alpha \nabla \phi \otimes \nabla \phi = \lambda g, \\
& \tau_{g} \phi = 0, 
\end{aligned}
\right.
\end{equation}
where $\operatorname{Ric}_{g}$ denotes the Ricci tensor with respect to the metric $g$, $\alpha \in \mathbb{R}$ is a constant, $\nabla \phi \otimes \nabla \phi := \phi^{*}h$ is the pull-back of the metric $h$ via $\phi$, and $\tau_{g} \phi := \mathrm{trace} \nabla d \phi$ denotes the tension field of $\phi$ with respect to $g$.
\end{defi}

\begin{defi}[M\"{u}ller \cite{Muller2}]\label{rem}\rm
The $5$-tuple $((\mathcal{M}, g), (\mathcal{N}, h), \phi, f, \lambda)$ is called a \textit{Ricci-harmonic soliton} if it satisfies the following coupled system:
\begin{equation}\label{RHS}
\left\{
\begin{aligned}
& \operatorname{Ric}_{g} - \alpha \nabla \phi \otimes \nabla \phi + \operatorname{Hess} f = \lambda g, \\
& \tau_{g} \phi = \left< \nabla \phi, \nabla f \right>, 
\end{aligned}
\right.
\end{equation}
where $\alpha \geqslant 0$ is a non-negative constant and $\operatorname{Hess} f$ denotes the Hessian of $f$. We say that the soliton $((\mathcal{M}, g), (\mathcal{N}, h), \phi, f, \lambda)$ is \textit{shrinking}, \textit{steady} and \textit{expanding} described as $\lambda > 0, \lambda = 0$, and $\lambda < 0$, respectively. If $f$ is constant in (\ref{RHS}), then the soliton is harmonic-Einstein. In such a case, we say that the soliton is \textit{trivial}.
\end{defi}

\begin{defi}\rm
A Ricci-harmonic soliton $((\mathcal{M}, g), (\mathcal{N}, h), \phi, f, \lambda)$ is \textit{normalized} if its potential function $f$ satisfies
\begin{equation}\label{normalized}
\int_{\mathcal{M}} f = 0.
\end{equation}
\end{defi}

The soliton (\ref{RHS}) above corresponds to a self-similar solution for the coupled system (\ref{RHF}). Note that, if $(\mathcal{N}, h) = (\mathbb{R}, dr^{2})$ and $\phi : (\mathcal{M}, g) \rightarrow (\mathbb{R}, dr^{2})$ is a constant function in (\ref{RHS}), then the soliton is exactly a gradient Ricci soliton. Recently, corresponding theory has been established for Ricci-harmonic solitons. For instance, Williams \cite{Williams} proved that any steady or expanding Ricci-harmonic soliton with a compact domain manifold must be trivial. Following the work by Cao and Zhou \cite{Cao-Zhou}, Yang and Shen \cite{Yang-Shen} gave a volume growth estimate for complete non-compact domain manifolds of shrinking Ricci-harmonic solitons. Recently, the author obtained a lower diameter bound for compact domain manifolds of shrinking Ricci-harmonic solitons \cite{Tadano2}.

\medskip

The aim of the present paper is to give some gap theorems for Ricci-harmonic solitons by showing necessary and sufficient conditions for the solitons to be harmonic-Einstein. The same observations were made for the Ricci soliton on K\"{a}hler manifolds \cite{Li} and Riemannian manifolds \cite{FL-GR}. Our main theorem is the following:

\begin{Main-Theorem}
Let $((\mathcal{M}, g), (\mathcal{N}, h), \phi, f, \lambda)$ be a shrinking Ricci-harmonic soliton satisfying {\rm (\ref{RHS})}. Suppose that the domain manifold $\mathcal{M}$ is compact and the soliton is normalized in sense of {\rm (\ref{normalized})}. Then there exists a non-negative constant $\delta \ll 1$ such that, if
\[
\operatorname{Ric}_{g} - \alpha \nabla \phi \otimes \nabla \phi \geqslant (\lambda - \delta) g, 
\]
then the soliton is harmonic-Einstein.
\end{Main-Theorem}

\begin{rem}\rm
In Theorem \ref{thm3} above, $\delta \geqslant 0$ depends only on the dimension $m$ of $\mathcal{M}$ and the average $\Lambda = \frac{1}{\mathrm{vol}(\mathcal{M}, g)} \int_{\mathcal{M}} \| \nabla f \|^{2}$ of the $L^{2}$-norm of $\nabla f$. Moreover, the proof shows that $\delta$ can be expressed explicitly in terms of $m$ and $\Lambda$.
\end{rem}

Roughly speaking, the above result shows that, if the generalized Ricci curvature of a Ricci-harmonic soliton is sufficiently close to that of a trivial soliton, then the soliton must be trivial. Hence, Theorem \ref{thm3} above gives us a gap phenomenon between non-trivial Ricci-harmonic solitons and trivial ones. We conclude this introduction by remarking that, if $(\mathcal{N}, h) = (\mathbb{R}, dr^{2})$ and $\phi : (\mathcal{M}, g) \rightarrow (\mathbb{R}, dr^{2})$ is a constant function in (\ref{RHS}), then our Theorem \ref{thm3} gives us a new gap theorem for the Ricci soliton.

\medskip

This paper is organized as follows: In Section 2, by introducing notations and definitions, we give some lemmas playing an important role in this paper. Ending with Section 3, a proof for Theorem \ref{thm3} will be given.

\section{Preriminaries}

In this section, we provide some formulas and lemmas playing an important role in the present paper. First, by taking the trace of the first equation in (\ref{RHS}), we have
\begin{equation}\label{fund-eq1-pre}
R - \alpha | \nabla \phi |^{2} + \Delta f = n \lambda, 
\end{equation}
where $R := g^{ij} R_{ij}$ denotes the scalar curvature on $(M, g)$.

\begin{lem}[M\"{u}ller \cite{Muller2}]\label{lem1}
Let $((\mathcal{M}, g), (\mathcal{N}, h), \phi, f, \lambda)$ be a shrinking Ricci-harmonic soliton satisfying {\rm (\ref{RHS})}. Then there exists some real constant $C_{1} \in \mathbb{R}$ such that
\begin{equation}\label{fund-eq2-pre}
R - \alpha | \nabla \phi |^{2} + | \nabla f |^{2} - 2 \lambda f = C_{1}.
\end{equation}
\end{lem}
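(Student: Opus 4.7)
The strategy is to show that the function $F := R - \alpha|\nabla\phi|^{2} + |\nabla f|^{2} - 2\lambda f$ has identically vanishing gradient, so that on the connected complete manifold $\mathcal{M}$ it reduces to a real constant $C_{1}$. The computation is the natural Ricci-harmonic analogue of Hamilton's classical identity $R + |\nabla f|^{2} - 2\lambda f = \text{const}$ for gradient Ricci solitons, and it should fall out from applying the contracted second Bianchi identity to the first soliton equation, combined with the harmonic equation for the tension field.

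Concretely, the plan is to take the divergence of the tensor equation $R_{ij} - \alpha(\nabla\phi\otimes\nabla\phi)_{ij} + \nabla_{i}\nabla_{j}f = \lambda g_{ij}$. The Bianchi identity gives $\nabla^{i}R_{ij} = \tfrac{1}{2}\nabla_{j}R$, and the Ricci commutation formula gives $\nabla^{i}\nabla_{i}\nabla_{j}f = \nabla_{j}\Delta f + R_{jk}\nabla^{k}f$. For the pull-back term, I would use the symmetry $\nabla_{i}\phi_{j}^{\beta} = \nabla_{j}\phi_{i}^{\beta}$ of the generalized Hessian of $\phi$ together with the parallelism of $h$ along $\phi$ to derive
\[
\nabla^{i}\bigl(h_{\alpha\beta}\phi_{i}^{\alpha}\phi_{j}^{\beta}\bigr)
= h_{\alpha\beta}\tau^{\alpha}\phi_{j}^{\beta} + \tfrac{1}{2}\nabla_{j}|\nabla\phi|^{2}.
\]
The second soliton equation $\tau^{\alpha} = \phi_{k}^{\alpha}\nabla^{k}f$ then rewrites the first term on the right as $(\nabla\phi\otimes\nabla\phi)_{jk}\nabla^{k}f$, so the divergence of the full tensor equation becomes
\[
\tfrac{1}{2}\nabla_{j}\bigl(R-\alpha|\nabla\phi|^{2}\bigr) + \mathscr{S}_{jk}\nabla^{k}f + \nabla_{j}\Delta f = 0.
\]

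Now I would substitute $\mathscr{S}_{jk}\nabla^{k}f = \lambda\nabla_{j}f - \tfrac{1}{2}\nabla_{j}|\nabla f|^{2}$, using the first soliton equation once more, and eliminate $\nabla_{j}\Delta f$ via the traced identity (\ref{fund-eq1-pre}), which yields $\nabla_{j}\Delta f = -\nabla_{j}(R-\alpha|\nabla\phi|^{2})$. After multiplying by $-2$, all terms combine into $\nabla_{j}F = 0$, completing the argument. The main subtlety is the careful computation of $\nabla^{i}(h_{\alpha\beta}\phi_{i}^{\alpha}\phi_{j}^{\beta})$: both the symmetry of $\nabla d\phi$ in its $\mathcal{M}$-indices and the fact that $h$ is parallel with respect to the induced connection on $\phi^{-1}T\mathcal{N}$ must be used, and this is the place where sign errors or stray Christoffel terms are most likely to appear.
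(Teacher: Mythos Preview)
Your proof is correct and follows essentially the same strategy as the paper: both differentiate the first soliton equation, invoke the contracted second Bianchi identity together with the commutation of third derivatives of $f$, handle the divergence of $\nabla\phi\otimes\nabla\phi$ via the tension-field equation $\tau_{g}\phi=\langle\nabla\phi,\nabla f\rangle$, and conclude that $\nabla F=0$. The only cosmetic difference is that the paper antisymmetrizes in two indices before tracing (so the curvature term $R_{kijp}\nabla_{p}f$ appears via the Riemann tensor and the traced equation~(\ref{fund-eq1-pre}) is never needed), whereas you take the divergence directly and then use~(\ref{fund-eq1-pre}) to eliminate $\nabla_{j}\Delta f$.
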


\begin{proof}
First, taking a covariant derivative of the first equation in (\ref{RHS}), we obtain
\[
\nabla_{k} R_{ij} - \alpha \nabla_{k} (\nabla_{i} \phi \nabla_{j} \phi) + \nabla_{k} \nabla_{i} \nabla_{j} f = 0.
\]
Subtracting the same equation with indices $i$ and $k$ interchanged, we have
\[
\nabla_{k} R_{ij} - \nabla_{i} R_{kj} - \alpha (\nabla_{i} \phi \nabla_{k} \nabla_{j} \phi - \nabla_{k} \phi \nabla_{i} \nabla_{j} \phi) + R_{kijp} \nabla_{p} f = 0.
\]
Tracing just above with $g^{kj}$ yields
\[
\nabla_{j} R_{ij} - \nabla_{i} R - \alpha \left( \nabla_{i} \phi \tau_{g} \phi - \frac{1}{2} \nabla_{i} | \nabla \phi |^{2} \right) + R_{ip} \nabla_{p} f = 0.
\]
Using the contracted second Bianchi identity $\nabla_{j} R_{ij} = \frac{1}{2} \nabla_{i} R$ and plugging in both equations of (\ref{RHS}) for $R_{ip}$ and for $\tau_{g} \phi$, we obtain
\[
- \frac{1}{2} \nabla_{i} (R - \alpha | \nabla \phi |^{2} + | \nabla f |^{2} - 2 \lambda f) = 0.
\]
This proves Lemma \ref{lem1}.
\end{proof}

For simplicity, we put
\begin{equation}\label{calS}
\mathscr{S} := \mathrm{Ric}_{g} - \nabla \phi \otimes \nabla \phi
\end{equation}
and refer to (\ref{calS}) as a \textit{generalized Ricci curvature}. We denote by
\begin{equation}\label{S}
S := \sum_{i = 1}^{m} \mathscr{S}(e_{i}, e_{i}) = R - \alpha | \nabla \phi |^{2}
\end{equation}
the trace of the generalized Ricci curvature, where $\{ e_{i} \}_{i = 1}^{m}$ is an orthonormal frame of $(\mathcal{M}, g)$. We call (\ref{S}) a \textit{generalized scalar curvature}. In view of (\ref{S}), the equation (\ref{fund-eq1-pre}) and (\ref{fund-eq2-pre}) are written as
\begin{equation}\label{fund-eq1}
S + \Delta f = m \lambda
\end{equation}
and
\begin{equation}\label{fund-eq2}
S + \| \nabla f \|^{2} - 2 \lambda f = C_{1}, 
\end{equation}
respectively. Then by taking the difference of (\ref{fund-eq1}) and (\ref{fund-eq2}), we have
\begin{equation}\label{fund-eq3}
\Delta f - \| \nabla f \|^{2} + 2 \lambda f = C_{2}, 
\end{equation}
where $C_{2} = m \lambda - C_{1}$. By (\ref{fund-eq2}) and the first equation in (\ref{RHS}), we obtain
\begin{equation}\label{fund-eq4}
\mathscr{S}(\nabla f, \cdot) = \frac{1}{2} dS.
\end{equation}
The following lemma plays a crucial role in this paper:

\begin{lem}\label{lem2}
Let $((\mathcal{M}, g), (\mathcal{N}, h), \phi, f, \lambda)$ be a shrinking Ricci-harmonic soliton satisfying {\rm (\ref{RHS})}. Suppose that the domain manifold $\mathcal{M}$ is compact. Then the following holds:
\begin{equation}\label{lem2-eq1}
\int_{\mathcal{M}} \| \mathrm{Hess} f \|^{2}\leqslant \int_{\mathcal{M}} \mathscr{S}(\nabla f, \nabla f) = \frac{1}{2} \int_{\mathcal{M}} (\Delta f)^{2}, 
\end{equation}
\begin{equation}\label{lem2-eq2}
\int_{\mathcal{M}} (\Delta f)^{2} = \int_{\mathcal{M}} ((m + 2) \lambda - S) \| \nabla f \|^{2}, \mbox{ and} \\
\end{equation}
\begin{equation}\label{lem2-eq3}
\| \nabla f \|^{2} \leqslant S_{\mathrm{max}} - S, 
\end{equation}
where $S_{\mathrm{max}}$ denotes the maximum value of the generalized scalar curvature.
\end{lem}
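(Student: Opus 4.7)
The plan is to establish the three claims in turn, relying only on integration by parts, the Bochner formula, and the soliton identities (\ref{fund-eq1}), (\ref{fund-eq2}), and (\ref{fund-eq4}) already derived in this section.

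First I would handle (\ref{lem2-eq2}), which is the most computational piece. Combining (\ref{fund-eq1}) and (\ref{fund-eq2}) gives $\Delta f = m\lambda - S = m\lambda - C_{1} - 2\lambda f + \|\nabla f\|^{2}$. Multiplying by $\Delta f$, integrating over the compact manifold $\mathcal{M}$, and using the elementary identities $\int_{\mathcal{M}} \Delta f = 0$ and $\int_{\mathcal{M}} f\,\Delta f = -\int_{\mathcal{M}} \|\nabla f\|^{2}$ yields $\int_{\mathcal{M}} (\Delta f)^{2} = 2\lambda \int_{\mathcal{M}} \|\nabla f\|^{2} + \int_{\mathcal{M}} (m\lambda - S)\|\nabla f\|^{2}$, which rearranges to (\ref{lem2-eq2}).

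Next I would prove (\ref{lem2-eq1}) in two steps. For the equality, the soliton identity (\ref{fund-eq4}) gives $\mathscr{S}(\nabla f, \nabla f) = \tfrac{1}{2}\langle \nabla S, \nabla f\rangle$; integrating by parts and substituting $S = m\lambda - \Delta f$ from (\ref{fund-eq1}), together with $\int_{\mathcal{M}} \Delta f = 0$, produces $\int_{\mathcal{M}} \mathscr{S}(\nabla f, \nabla f) = \tfrac{1}{2}\int_{\mathcal{M}} (\Delta f)^{2}$. For the inequality I would apply the Bochner formula $\tfrac{1}{2}\Delta\|\nabla f\|^{2} = \|\mathrm{Hess}\,f\|^{2} + \langle \nabla f, \nabla \Delta f\rangle + \mathrm{Ric}(\nabla f, \nabla f)$ and integrate, obtaining $\int_{\mathcal{M}} \|\mathrm{Hess}\,f\|^{2} = \int_{\mathcal{M}} (\Delta f)^{2} - \int_{\mathcal{M}} \mathrm{Ric}(\nabla f, \nabla f)$. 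Since $\mathrm{Ric}(\nabla f, \nabla f) = \mathscr{S}(\nabla f, \nabla f) + \alpha\,(\nabla\phi \otimes \nabla\phi)(\nabla f, \nabla f) \geqslant \mathscr{S}(\nabla f, \nabla f)$ by $\alpha \geqslant 0$ and the positivity of the pull-back tensor, combining with the equality already obtained gives $\int_{\mathcal{M}} \|\mathrm{Hess}\,f\|^{2} \leqslant 2\int_{\mathcal{M}} \mathscr{S}(\nabla f, \nabla f) - \int_{\mathcal{M}} \mathscr{S}(\nabla f, \nabla f) = \int_{\mathcal{M}} \mathscr{S}(\nabla f, \nabla f)$, as required.

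Finally, (\ref{lem2-eq3}) follows from a pointwise maximum-principle argument. Writing (\ref{fund-eq2}) as $S + \|\nabla f\|^{2} = C_{1} + 2\lambda f$ and using $\lambda > 0$ in the shrinking case, the function $S + \|\nabla f\|^{2}$ attains its maximum at a maximum point of $f$; at such a point $\nabla f = 0$, so the maximum value of $S + \|\nabla f\|^{2}$ coincides with the value of $S$ there, hence is bounded by $S_{\max}$. This gives $\|\nabla f\|^{2} \leqslant S_{\max} - S$ pointwise. The only step that requires any insight is the equality half of (\ref{lem2-eq1}): a naive Bochner approach leaves an undesired $\mathrm{Ric}$-term, and the clean move is to recognise that (\ref{fund-eq4}) rewrites $\mathscr{S}(\nabla f, \nabla f)$ as a divergence; once that is in hand, everything else is routine.
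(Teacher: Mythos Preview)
Your proof is correct and follows essentially the same approach as the paper: Bochner's formula together with the soliton identities (\ref{fund-eq1}), (\ref{fund-eq2}), (\ref{fund-eq4}) and the nonnegativity of $\alpha\,\nabla\phi\otimes\nabla\phi$ for (\ref{lem2-eq1}), and the maximum of $f$ via (\ref{fund-eq2}) for (\ref{lem2-eq3}). The only noticeable difference is in (\ref{lem2-eq2}): the paper derives it from the already-proved equality in (\ref{lem2-eq1}) by writing $\mathscr{S}(\nabla f,\nabla f)=\lambda\|\nabla f\|^{2}-\mathrm{Hess}\,f(\nabla f,\nabla f)$ and integrating the Hessian term by parts, whereas you obtain it directly from (\ref{fund-eq2}) without invoking (\ref{lem2-eq1}); this is a harmless reordering and both routes are equally short.
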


\begin{proof}
First, we have
\[
\begin{aligned}
\frac{1}{2} \Delta \| \nabla f \|^{2} & = \| \mathrm{Hess} f \|^{2} + g(\nabla f, \nabla \Delta f) + \mathrm{Ric}_{g}(\nabla f, \nabla f) \\
& = \| \mathrm{Hess} f \|^{2} - g(\nabla f, \nabla S) + \operatorname{Ric}_{g}(\nabla f, \nabla f) & & \mbox{(due to (\ref{fund-eq1}))} \\
& \geqslant \| \mathrm{Hess} f \|^{2} - \mathscr{S}(\nabla f, \nabla f), & & \mbox{(due to (\ref{fund-eq4}))}
\end{aligned}
\]
where the last inequality follows from $\alpha \nabla \phi \otimes \nabla \phi(\nabla f, \nabla f) \geqslant 0$. By integrating both sides of the last inequality, we have 
\[
\int_{\mathcal{M}} \| \mathrm{Hess} f \|^{2} \leqslant \int_{\mathcal{M}} \mathscr{S}(\nabla f, \nabla f).
\]
Then
\[
\begin{aligned}
\int_{\mathcal{M}} (\Delta f)^{2} & = - \int_{\mathcal{M}} S \Delta f & & \mbox{(due to (\ref{fund-eq1}))} \\
& = \int_{\mathcal{M}} g(\nabla S, \nabla f) = 2 \int_{\mathcal{M}} \mathscr{S}(\nabla f, \nabla f), & & \mbox{(due to (\ref{fund-eq4}))}
\end{aligned}
\]
which proves (\ref{lem2-eq1}). Secondly, we have
\[
\begin{aligned}
\int_{\mathcal{M}} (\Delta f)^{2} & = 2 \int_{\mathcal{M}} \mathscr{S}(\nabla f, \nabla f) & & \mbox{(due to (\ref{lem2-eq1}))} \\
& = 2 \lambda \int_{\mathcal{M}} \| \nabla f \|^{2} - 2 \int_{\mathcal{M}} \mathrm{Hess} f(\nabla f, \nabla f) \\
& = 2 \lambda \int_{\mathcal{M}} \| \nabla f \|^{2} - \int_{\mathcal{M}} g(\nabla f, \nabla \| \nabla f \|^{2}) \\
& = 2 \lambda \int_{\mathcal{M}} \| \nabla f \|^{2} + \int_{\mathcal{M}} \| \nabla f \|^{2} \Delta f \\
& = (m + 2) \lambda \int_{\mathcal{M}} \| \nabla f \|^{2} - \int_{\mathcal{M}} S \| \nabla f \|^{2} & & \mbox{(due to (\ref{fund-eq1}))}
\end{aligned}
\]
and (\ref{lem2-eq2}) follows. Here, in the last third equality above, we have used a property of the Levi-Civita connection. Finally, in order to prove (\ref{lem2-eq3}), recall from (\ref{fund-eq2}) that $2 \lambda f = S + \| \nabla f \|^{2} - C_{1}$ for some constant $C_{1}$. By compactness of the domain manifold $\mathcal{M}$, there exists some global maximum point $p \in \mathcal{M}$ of the potential function. Then, it follows from (\ref{fund-eq2}) that for any point $x \in \mathcal{M}$, 
\[
2 \lambda f(p) = S(p) - C_{1} \geqslant 2 \lambda f(x) = S(x) + \| \nabla f \|^{2} (x) - C_{1}, 
\]
and hence, $S(p) \geqslant S(x)$. Therefore, the generalized scalar curvature also attains its maximum at $p$, and we obtain (\ref{lem2-eq3}).
\end{proof}

\section{Gap Theorems}

In this section, we extend gap theorems for compact K\"{a}hler-Ricci solitons \cite{Li} and for compact gradient Ricci solitons \cite{FL-GR} to the case of Ricci-harmonic solitons with a compact domain manifold. First, note that, if a Ricci-harmonic soliton $((\mathcal{M}, g), (\mathcal{N}, h), \phi, \lambda)$ is trivial, then the generalized scalar curvature $S$ on $\mathcal{M}$ satisfies
\[
S = m \lambda, \quad \mbox{therefore}, \quad S_{\mathrm{max}} = m \lambda.
\]
The following result characterizes triviality for a Ricci-harmonic soliton by using an upper bound for $S_{\mathrm{max}} - m \lambda$ in terms of the average of the $L^{2}$-norm of $\nabla f$:

\begin{thm}\label{thm1}
Let $((\mathcal{M}, g), (\mathcal{N}, h), \phi, f, \lambda)$ be a shrinking Ricci-harmonic soliton satisfying {\rm (\ref{RHS})}. Suppose that the domain manifold $\mathcal{M}$ is compact. Then the soliton is harmonic-Einstein if and only if
\[
S_{\mathrm{max}} - m \lambda \leqslant \left( 1 + \frac{2}{m} \right) \frac{1}{\mathrm{vol}(M, g)} \int_{\mathcal{M}} \| \nabla f \|^{2}.
\]
\end{thm}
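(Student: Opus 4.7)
The ``only if'' direction is immediate: if the soliton is harmonic-Einstein, then $f$ is constant, so $\nabla f \equiv 0$ makes the right-hand side vanish and $S \equiv m\lambda$ makes the left-hand side vanish.

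For the ``if'' direction, I abbreviate $V := \int_{\mathcal{M}} \|\nabla f\|^{2}$, $A := V/\mathrm{vol}(\mathcal{M}, g)$, $B := S_{\mathrm{max}} - m\lambda$, and $X := \int_{\mathcal{M}} (\Delta f)^{2}$, and I aim to show $A = 0$, whence $f$ is constant and the soliton is harmonic-Einstein. Integrating (\ref{lem2-eq3}) already yields $A \leqslant B$, so the hypothesis traps $B$ in the narrow range $A \leqslant B \leqslant (1 + 2/m) A$; the task is to show this pinches $A$ to zero.

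The key manipulation is to multiply the pointwise estimate (\ref{lem2-eq3}) by the non-negative weight $\|\nabla f\|^{2}$ and integrate, which gives $\int_{\mathcal{M}} \|\nabla f\|^{4} \leqslant S_{\mathrm{max}} V - \int_{\mathcal{M}} S \|\nabla f\|^{2}$. I then substitute $\int_{\mathcal{M}} S \|\nabla f\|^{2} = (m + 2)\lambda V - X$ from (\ref{lem2-eq2}), and apply the Cauchy-Schwarz inequality $\int_{\mathcal{M}} \|\nabla f\|^{4} \geqslant V^{2}/\mathrm{vol}(\mathcal{M}, g) = AV$ on the left-hand side; this rearranges to $X \geqslant V(A + 2\lambda - B)$. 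The hypothesis $B \leqslant (1 + 2/m) A$ converts this into the explicit lower bound $X \geqslant V\bigl(2\lambda - (2/m) A\bigr)$.

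The hard part is to close the argument by producing a matching upper bound on $X$, forcing saturation of all the inequalities used. I expect this to come from combining (\ref{lem2-eq1}) (the integrated Bochner-type identity of Lemma \ref{lem2}) with the pointwise Cauchy-Schwarz $\|\mathrm{Hess}\, f\|^{2} \geqslant (\Delta f)^{2}/m$. Once everything is saturated, equality in the integrated Cauchy-Schwarz forces $\|\nabla f\|^{2}$ to be a.e.~constant, while equality in the pointwise (\ref{lem2-eq3}) forces $\|\nabla f\|^{2}(x) = S_{\mathrm{max}} - S(x)$; the argument inside the proof of Lemma \ref{lem2} identifies this equality locus with the maximum set of $f$, so $f \equiv f_{\mathrm{max}}$ everywhere. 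Hence $f$ is constant and the soliton is harmonic-Einstein.
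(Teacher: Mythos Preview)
Your argument has a genuine gap precisely where you flag it as ``the hard part'': you never produce the matching upper bound on $X=\int_{\mathcal M}(\Delta f)^2$, and the mechanism you propose does not supply one. Combining (\ref{lem2-eq1}) with the pointwise Cauchy--Schwarz inequality $\|\mathrm{Hess}\,f\|^{2}\geqslant (\Delta f)^{2}/m$ yields only
\[
\tfrac{1}{m}\,X \;\leqslant\; \int_{\mathcal M}\|\mathrm{Hess}\,f\|^{2} \;\leqslant\; \tfrac{1}{2}\,X,
\]
which is the vacuous statement $m\geqslant 2$ and gives no control of $X$ in terms of $V$ or $A$. Without an upper bound of the form $X\leqslant V\bigl(2\lambda-\tfrac{2}{m}A\bigr)$, none of your intermediate inequalities are forced to saturate, and the proof does not close. (Your weighting of (\ref{lem2-eq3}) by $\|\nabla f\|^{2}$ would in any case only force pointwise equality on the set where $\|\nabla f\|^{2}>0$; but this is moot given the main gap.)

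The paper avoids the difficulty by weighting (\ref{lem2-eq3}) with $S$ rather than with $\|\nabla f\|^{2}$: starting from (\ref{lem2-eq2}), it bounds $\int_{\mathcal M} S\,\|\nabla f\|^{2}$ above by $\int_{\mathcal M} S(S_{\max}-S)=S_{\max}\int_{\mathcal M}S-\int_{\mathcal M}S^{2}$. The identities $\int_{\mathcal M}S=m\lambda\,\mathrm{vol}(\mathcal M,g)$ and $\int_{\mathcal M}S^{2}=m^{2}\lambda^{2}\,\mathrm{vol}(\mathcal M,g)+X$, both immediate from (\ref{fund-eq1}), make the $X$-terms on the two sides \emph{cancel}, leaving the unconditional inequality
\[
S_{\max}-m\lambda \;\geqslant\; \Bigl(1+\tfrac{2}{m}\Bigr)\frac{1}{\mathrm{vol}(\mathcal M,g)}\int_{\mathcal M}\|\nabla f\|^{2}
\]
for every compact shrinking Ricci-harmonic soliton. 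The hypothesis then forces equality in (\ref{lem2-eq3}) pointwise, whence (\ref{fund-eq2}) gives $2\lambda f=S_{\max}-C_{1}$, so $f$ is constant. No upper bound on $X$ is ever needed, because the choice of weight makes $X$ drop out of the final inequality altogether.
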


\begin{proof}
The result is obvious if the soliton is trivial, since in such a case the potential function is constant. Conversely, we have
\[
\begin{aligned}
\int_{ \mathcal{M}} (\Delta f)^{2} & = (m + 2) \lambda \int_{\mathcal{M}} \| \nabla f \|^{2} - \int_{\mathcal{M}} S \| \nabla f \|^{2} & & \mbox{(due to (\ref{lem2-eq2}))} \\
& \geqslant (m + 2) \lambda \int_{\mathcal{M}} \| \nabla f \|^{2} - S_{\mathrm{max}} \int_{\mathcal{M}} S + \int_{\mathcal{M}} S^{2} & & \mbox{(due to (\ref{lem2-eq3}))} \\
& = (m + 2) \lambda \int_{\mathcal{M}} \| \nabla f \|^{2} - m \lambda S_{\mathrm{max}} \mathrm{vol}(\mathcal{M}, g) & & \mbox{(due to (\ref{fund-eq1}))} \\
& \quad + m^{2} \lambda^{2} \hspace{0.25mm} \mathrm{vol}(\mathcal{M}, g) + \int_{\mathcal{M}} (\Delta f)^{2}, 
\end{aligned}
\]
which yields
\[
S_{\mathrm{max}} - m \lambda \geqslant \left( 1 + \frac{2}{m} \right) \frac{1}{\mathrm{vol}(\mathcal{M}, g)} \int_{\mathcal{M}} \| \nabla f \|^{2}.
\]
Hence, by the assumption in Theorem \ref{thm1}, the equality just above must be achieved. This shows that the equality in (\ref{lem2-eq3}) must also attain. Therefore, by (\ref{fund-eq2}) we have
\[
2 \lambda f - S + C_{1} = \| \nabla f \|^{2} = S_{\mathrm{max}} - S, 
\]
equivalently, $2 \lambda f = S_{\mathrm{max}} - C_{1}$. Hence, the potential function is constant and the soliton is trivial.
\end{proof}

In view of (\ref{RHS}) and (\ref{lem2-eq1}), on any compact Ricci-harmonic soliton $((\mathcal{M}, g), (\mathcal{N}, h), \phi, f, \lambda)$, we have
\[
\int_{\mathcal{M}} \| \mathscr{S} - \lambda g \|^{2} \leqslant \int_{\mathcal{M}} \mathscr{S}(\nabla f, \nabla f).
\]
Hence, if $\int_{\mathcal{M}} \mathscr{S}(\nabla f, \nabla f) \leqslant 0$, then the soliton must be trivial. Therefore, the quantity $\int_{\mathcal{M}} \mathscr{S}(\nabla f, \nabla f)$ measures the difference of the soliton from being harmonic-Einstein. The following result characterizes triviality of the soliton by giving an upper bound of this quantity:

\begin{thm}\label{thm2}
Let $((\mathcal{M}, g), (\mathcal{N}, h), \phi, f, \lambda)$ be a shrinking Ricci-harmonic soliton satisfying {\rm (\ref{RHS})}. Suppose that the domain manifold $\mathcal{M}$ is compact. Then the soliton is harmonic-Einstein if and only if
\[
\int_{\mathcal{M}} \mathscr{S}(\nabla f, \nabla f) \leqslant \frac{\lambda_{1}}{2} \int_{\mathcal{M}} \| \nabla f \|^{2}, 
\]
where $\lambda_{1}$ denotes the first non-zero eigenvalue of the Laplacian for $(\mathcal{M}, g)$.
\end{thm}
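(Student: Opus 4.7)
The plan is to show that for \emph{every} compact shrinking Ricci-harmonic soliton one has the \textit{reverse} inequality
\[
\int_{\mathcal{M}} \mathscr{S}(\nabla f, \nabla f) \geqslant \frac{\lambda_{1}}{2} \int_{\mathcal{M}} \|\nabla f\|^{2},
\]
so that the hypothesis of Theorem \ref{thm2} forces equality, and then to exploit rigidity of the Poincar\'e inequality together with the soliton identities from Section 2 to conclude $\nabla f \equiv 0$. The ``only if'' direction is trivial since $f$ constant makes both sides vanish.

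For the reverse inequality, I would set $\tilde{f} := f - \frac{1}{\mathrm{vol}(\mathcal{M},g)} \int_{\mathcal{M}} f$ so that $\int_{\mathcal{M}} \tilde{f} = 0$, with $\nabla \tilde{f} = \nabla f$ and $\Delta \tilde{f} = \Delta f$. The variational characterisation of $\lambda_{1}$ gives $\lambda_{1} \int_{\mathcal{M}} \tilde{f}^{2} \leqslant \int_{\mathcal{M}} \|\nabla f\|^{2}$. Combining integration by parts $\int_{\mathcal{M}} \|\nabla f\|^{2} = -\int_{\mathcal{M}} \tilde{f} \, \Delta f$ with Cauchy--Schwarz and this Poincar\'e estimate yields
\[
\lambda_{1} \int_{\mathcal{M}} \|\nabla f\|^{2} \leqslant \int_{\mathcal{M}} (\Delta f)^{2} = 2 \int_{\mathcal{M}} \mathscr{S}(\nabla f, \nabla f),
\]
the last equality being (\ref{lem2-eq1}). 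Under the theorem's hypothesis this chain collapses to equalities, and rigidity in Cauchy--Schwarz plus Poincar\'e forces $\tilde{f}$ to be a first eigenfunction, i.e.\ $\Delta f = -\lambda_{1} \tilde{f}$.

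Substituting this eigenfunction relation into the soliton identity (\ref{fund-eq3}) produces a pointwise affine formula
\[
\|\nabla f\|^{2} = (2\lambda - \lambda_{1}) f + c
\]
for some constant $c$. Now the argument splits into two elementary cases. If $2\lambda \neq \lambda_{1}$, then at every critical point of $f$ one has $f = -c/(2\lambda - \lambda_{1})$; since $\mathcal{M}$ is compact, both the maximum and the minimum of $f$ are realised and coincide, so $f$ is constant. If $2\lambda = \lambda_{1}$, then $\|\nabla f\|^{2} \equiv c$ is constant, and evaluating at a critical point forces $c = 0$, so again $f$ is constant. In either case the soliton is harmonic-Einstein.

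The main obstacle to anticipate is that Theorem \ref{thm2} does \emph{not} assume the normalization (\ref{normalized}), so the Poincar\'e inequality cannot be applied to $f$ directly; the passage to $\tilde{f}$ together with the observation that all quantities appearing in Lemma \ref{lem2} depend only on $\nabla f$ and $\Delta f$ (and hence are invariant under $f \mapsto f + \mathrm{const}$) is what makes the argument go through. The potential bifurcation at $2\lambda = \lambda_{1}$ might look subtle, but compactness provides a critical point and resolves it immediately.
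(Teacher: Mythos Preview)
Your proposal is correct and follows essentially the same route as the paper: establish the reverse inequality $\lambda_{1}\int_{\mathcal{M}}\|\nabla f\|^{2}\leqslant\int_{\mathcal{M}}(\Delta f)^{2}=2\int_{\mathcal{M}}\mathscr{S}(\nabla f,\nabla f)$, force equality under the hypothesis, deduce that $f$ (up to its mean) is a first eigenfunction, insert this into (\ref{fund-eq3}) to get $\|\nabla f\|^{2}=(2\lambda-\lambda_{1})f+\text{const}$, and finish by the two-case critical-point argument. The only noteworthy difference is that you are more careful than the paper about normalisation: the paper simply asserts that equality in $\lambda_{1}\int\|\nabla f\|^{2}\leqslant\int(\Delta f)^{2}$ makes $f$ itself an eigenfunction, which is literally true only when $\int_{\mathcal{M}}f=0$, whereas you correctly pass to $\tilde{f}=f-\overline{f}$ and obtain $\Delta f=-\lambda_{1}\tilde{f}$; the ensuing affine identity then differs from the paper's only by the value of the additive constant, and the endgame is identical.
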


\begin{proof}
The result is obvious if the soliton is trivial. To prove that the soliton is trivial, we first note that any smooth function $f \in \mathcal{C}^{\infty}(\mathcal{M})$ satisfies
\[
\lambda_{1} \int_{\mathcal{M}} \| \nabla f \|^{2} \leqslant \int_{\mathcal{M}} (\Delta f)^{2}.
\]
Therefore, by (\ref{lem2-eq1}) and the assumption in Theorem \ref{thm2}, we see that the potential function $f$ satisfies
\begin{equation}
\lambda_{1} \int_{\mathcal{M}} \| \nabla f \|^{2} = \int_{\mathcal{M}} (\Delta f)^{2}, 
\end{equation}
and hence, the function $f$ is an eigenfunction of the Laplacian for $(\mathcal{M}, g)$ associated with $\lambda_{1}$. Then, it follows from (\ref{fund-eq4}) that $(2 \lambda - \lambda_{1}) f = \| \nabla f \|^{2} + C_{2}$. In the case that $2 \lambda - \lambda_{1} \neq 0$, since $\nabla f$ vanishes at any local extrema of $f$, we have
\[
f_{\mathrm{max}} = f_{\mathrm{min}} = \frac{C_{2}}{2 \lambda - \lambda_{1}}, 
\]
which shows that the potential function is constant and the soliton is trivial. In the case that $2 \lambda - \lambda_{1} = 0$, since we have $0 = \| \nabla f \|^{2} + C_{2}$, the same argument as in the previous case allows us to obtain $C_{2} = 0$, which shows that the potential function is also constant and the soliton is trivial.
\end{proof}

\begin{rem}\rm
In \cite[Theorem 1.1]{Li} and \cite[Theorem 2.6]{FL-GR}, symbolic gap theorems for Ricci solitons are given. However, the same argument does not hold for Ricci-harmonic solitons unless $\phi$ is a constant map.
\end{rem}

The following result shows that, if the generalized Ricci curvature of a Ricci-harmonic soliton is sufficiently close to that of a trivial soliton, then the soliton must be trivial. See \cite{Li, FL-GR, Tadano} for the same gap theorems for compact K\"{a}hler-Ricci solitons, compact Riemannian Ricci solitons, and compact Sasaki-Ricci solitons, respectively.

\begin{thm}\label{thm3}
Let $((\mathcal{M}, g), (\mathcal{N}, h), \phi, f, \lambda)$ be a shrinking Ricci-harmonic soliton satisfying {\rm (\ref{RHS})}. Suppose that the domain manifold $\mathcal{M}$ is compact and the soliton is normalized in sense of {\rm (\ref{normalized})}. Then, there exists a non-negative constant $\delta \ll 1$ such that if
\[
\operatorname{Ric}_{g} - \alpha \nabla \phi \otimes \nabla \phi \geqslant (\lambda - \delta) g, 
\]
then the soliton is harmonic-Einstein.
\end{thm}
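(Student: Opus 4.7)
My plan is to argue by contradiction: suppose the soliton is non-trivial, so that $\Lambda := \frac{1}{\mathrm{vol}(\mathcal{M},g)}\int_{\mathcal{M}}\|\nabla f\|^{2} > 0$. I will combine a quantitative sharpening of the argument behind Theorem \ref{thm1}, which forces $S_{\max}-m\lambda$ to grow like $\Lambda/\delta$ as $\delta \to 0^{+}$, with an upper bound on $S_{\max}-m\lambda$ derived from Myers' diameter bound together with the normalization $\int_{\mathcal{M}} f = 0$. The incompatibility of these bounds for small $\delta$ yields the desired gap.

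\textbf{The lower bound.} First I extract the pointwise consequences of the hypothesis. Since the soliton equation gives $\mathrm{Hess}\, f = \lambda g - \mathscr{S}$, the assumption $\mathscr{S} \geq (\lambda-\delta)g$ is equivalent to $\mathrm{Hess}\, f \leq \delta g$; taking traces yields $\Delta f \leq m\delta$ and $S \geq m(\lambda-\delta)$. Setting $\mathcal{D}(x) := S_{\max} - S(x) - \|\nabla f(x)\|^{2}$, which is non-negative by \eqref{lem2-eq3}, a computation essentially identical to the proof of Theorem \ref{thm1} upgrades the inequality there to the exact identity
\[
\int_{\mathcal{M}} S\,\mathcal{D} \;=\; \lambda\,\mathrm{vol}(\mathcal{M},g)\bigl[\,m(S_{\max}-m\lambda) - (m+2)\Lambda\,\bigr].
\]
Inserting $S \geq m(\lambda-\delta)$ on the left and $\int_{\mathcal{M}}\mathcal{D} = (S_{\max}-m\lambda-\Lambda)\,\mathrm{vol}(\mathcal{M},g)$ on the right, and rearranging, gives the key lower bound
\[
S_{\max}-m\lambda \;\geq\; \frac{m\delta+2\lambda}{m\delta}\,\Lambda,
\]
which grows like $1/\delta$ as $\delta \to 0^{+}$.

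\textbf{The upper bound and conclusion.} From \eqref{lem2-eq3} and the pointwise lower bound on $S$, $\|\nabla f\|_{\infty}^{2} \leq S_{\max} - m(\lambda-\delta) = (S_{\max}-m\lambda) + m\delta$. Since $\mathrm{Ric}_{g} \geq \mathscr{S} \geq (\lambda-\delta)g$, Myers' theorem gives a diameter bound $\mathrm{diam}(\mathcal{M},g) \leq \pi\sqrt{(m-1)/(\lambda-\delta)}$, and integrating $\|\nabla f\|$ along a minimizing geodesic yields $f_{\max}-f_{\min} \leq \mathrm{diam}(\mathcal{M},g)\cdot\|\nabla f\|_{\infty}$. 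The normalization $\int_{\mathcal{M}} f = 0$ forces $f_{\max} \geq 0 \geq f_{\min}$, so $f_{\max} \leq \mathrm{diam}(\mathcal{M},g)\cdot\|\nabla f\|_{\infty}$. Evaluating \eqref{fund-eq2} at the maximum of $f$, which by the proof of \eqref{lem2-eq3} is also where $S$ attains its maximum and where $\nabla f$ vanishes, gives $2\lambda f_{\max} = S_{\max}-m\lambda-\Lambda$. Substituting into the Lipschitz estimate produces a quadratic inequality in $S_{\max}-m\lambda$ whose solution bounds $S_{\max}-m\lambda$ from above by a quantity depending only on $m$, $\lambda$, $\Lambda$, and $\delta$, and remaining bounded as $\delta\to 0^{+}$. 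Comparing this with the divergent lower bound yields $\Lambda \leq \delta\cdot h(m,\lambda,\Lambda,\delta)$ for some $h$ bounded as $\delta \to 0^{+}$, which contradicts $\Lambda>0$ once $\delta$ is below an explicit threshold depending on $m$ and $\Lambda$; hence $f$ must be constant and the soliton is harmonic-Einstein. The main obstacle is the identity in the lower-bound step: one must insert the pointwise estimate $S \geq m(\lambda-\delta)$ at exactly the right place in the chain of inequalities of the Theorem \ref{thm1} proof in order to extract the crucial factor $1/\delta$ that beats the bounded upper estimate.
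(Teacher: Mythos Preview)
Your argument is correct. The identity $\int_{\mathcal{M}} S\,\mathcal{D} = \lambda\,\mathrm{vol}(\mathcal{M},g)\bigl[m(S_{\max}-m\lambda)-(m+2)\Lambda\bigr]$ checks out (using $\int S = m\lambda\,\mathrm{vol}$, $\int S^{2} = m^{2}\lambda^{2}\,\mathrm{vol} + \int(\Delta f)^{2}$, and \eqref{lem2-eq2}), and combining it with $S\geq m(\lambda-\delta)$ and $\mathcal{D}\geq 0$ does give $S_{\max}-m\lambda \geq \frac{m\delta+2\lambda}{m\delta}\Lambda$. Your upper bound, via $2\lambda f_{\max}=S_{\max}-m\lambda-\Lambda$ together with $f_{\max}\leq \mathrm{diam}(\mathcal{M},g)\cdot\|\nabla f\|_{\infty}$ and Myers, yields a quadratic inequality for $S_{\max}-m\lambda$ whose upper root stays bounded as $\delta\to 0^{+}$, so the contradiction follows.

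Your route differs from the paper's. The paper also proves an upper bound $S < L(m,K,\Lambda)$ that stays finite as $K=\lambda-\delta\to\lambda^{-}$, but does so by controlling $\nabla\sqrt{2\lambda f+B}$ rather than $f_{\max}$ directly; it then uses this to show $\frac{1}{\mathrm{vol}}\int(\Delta f)^{2}=\frac{1}{\mathrm{vol}}\int(S-m\lambda)^{2}\to 0$ via a decomposition over $\{S\gtrless m\lambda\}$, contradicting the lower bound $\frac{1}{\mathrm{vol}}\int(\Delta f)^{2}\geq 2K\Lambda$ from \eqref{lem2-eq1}. So the paper pits a vanishing upper bound against a fixed positive lower bound for $\int(\Delta f)^{2}$, whereas you pit a diverging lower bound against a bounded upper bound for $S_{\max}-m\lambda$. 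Your lower-bound step is a genuine quantitative sharpening of Theorem~\ref{thm1} and makes the $1/\delta$ mechanism transparent; your upper-bound step is also somewhat more direct than the paper's square-root trick. Both arguments produce an explicit $\delta$-threshold depending on $m$, $\lambda$, and $\Lambda$.
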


\begin{proof}
We here assume that the soliton is non-trivial and deduce a contradiction. For simplicity, we put
\[
\Lambda := \frac{1}{\mathrm{vol}(\mathcal{M}, g)} \int_{\mathcal{M}} \| \nabla f \|^{2}.
\]
Then by (\ref{fund-eq4}) and the normalization (\ref{normalized}), we have
\begin{equation}\label{fund-eq5}
\Delta f - \| \nabla f \|^{2} + 2 \lambda f = - \Lambda.
\end{equation}
We assume that $\mathscr{S} \geqslant K g$ for some positive constant $K > 0$. Then, the generalized scalar curvature satisfies $S \geqslant mK$ and it follows from Myers theorem that
\begin{equation}\label{Myers}
\mathrm{diam}(\mathcal{M}, g) \leqslant \pi \sqrt{\frac{m - 1}{K}}.
\end{equation}

\begin{lem}\label{Lem}
The generalized scalar curvature is uniformly bounded from above, i.e., 
\begin{equation}\label{Lambda}
S < L(m, K, \Lambda), 
\end{equation}
where $L = L(m, K, \Lambda)$ is a constant depending only on the numbers $m, K$ and $\Lambda$. Moreover, 
\[
\lim_{K \rightarrow \lambda - 0} L(m, K, \Lambda) < + \infty.
\]
\end{lem}

\begin{proof}[Proof of Lemma {\rm \ref{Lem}}]
We first observe that
\[
\begin{aligned}
\| \nabla f \|^{2} & = \Delta f + 2 \lambda f + \Lambda & & \mbox{(due to (\ref{fund-eq5}))} \\
& = m \lambda - S + 2 \lambda f + \Lambda & & \mbox{(due to (\ref{fund-eq1}))} \\
& \leqslant m \lambda - mK + 2 \lambda f + \Lambda.
\end{aligned}
\]
Put $B := m \lambda - mK + \Lambda + 1$. Note that $B$ is constant. Then, it follows from the last inequality just above that $2 \lambda f + B \geqslant 1$, and hence, 
\begin{equation}\label{est}
\frac{\| \nabla f \|^{2}}{2 \lambda f + B} \leqslant 1 - \frac{1}{2 \lambda f + B} < 1.
\end{equation}
By compactness of the domain manifold $\mathcal{M}$, there exists some global minimum point $q \in \mathcal{M}$ of the potential function. By the normalized condition (\ref{normalized}), we see that $f(q) \leqslant 0$. Therefore, for any point $x \in \mathcal{M}$, 
\[
\begin{aligned}
\sqrt{2 \lambda f + B}(x) - \sqrt{2 \lambda f + B}(q) & \leqslant \left( \max_{\mathcal{M}} \left \| \nabla \sqrt{2 \lambda f + B} \right \| \right) \cdot \operatorname{diam}(\mathcal{M}, g) \\
& = \left( \max_{\mathcal{M}} \frac{\| \nabla f \|}{\sqrt{2 \lambda f + B}} \right) \cdot \lambda \cdot \mathrm{diam} (\mathcal{M}, g) \\
& < \lambda \cdot \pi \sqrt{\frac{m - 1}{K}}. \hspace{20mm} \mbox{(due to (\ref{est}) and (\ref{Myers}))}
\end{aligned}
\]
For simplicity, we put $c_{m} := \lambda \pi \sqrt{m - 1}$. Then, it follows from the above that
\begin{equation}\label{est2}
2 \lambda f(x) < \frac{(c_{m})^{2}}{K} + 2c_{m} \sqrt{\frac{B}{K}}.
\end{equation}
Therefore, 
\[
\begin{aligned}
S & = m \lambda - \Delta f & & \mbox{(due to (\ref{fund-eq1}))} \\
& = m \lambda + 2 \lambda f - \| \nabla f \|^{2} + \Lambda & & \mbox{(due to (\ref{fund-eq5}))} \\
& < m \lambda + \frac{(c_{m})^{2}}{K} + 2c_{m} \sqrt{\frac{B}{K}} + \Lambda. & & \mbox{(due to (\ref{est2}))}
\end{aligned}
\]
Hence, we can take the last number just above as $L(m, K, \Lambda)$.
\end{proof}

Now, we can finish the proof of Theorem \ref{thm3}. For simplicity, we put
\[
\Omega_{+} := \{ x \in \mathcal{M} : S(x) > m \lambda \} \quad \mbox{and} \quad \Omega_{-} := \{ x \in \mathcal{M} : S(x) < m \lambda \}, 
\]
respectively. Then, we have
\[
\begin{aligned}
\frac{1}{\mathrm{vol}(\mathcal{M}, g)} \int_{\mathcal{M}} (\Delta f)^{2} & = \frac{1}{\mathrm{vol}(\mathcal{M}, g)} \int_{\mathcal{M}} (S - m \lambda)^{2} \quad \mbox{(due to (\ref{fund-eq1}))} \\
& = \frac{1}{\mathrm{vol}(\mathcal{M}, g)} \int_{\Omega_{+}} (S - m \lambda)^{2} + \frac{1}{\mathrm{vol}(\mathcal{M}, g)} \int_{\Omega_{-}} (m \lambda - S)^{2} \\
& < \frac{1}{\mathrm{vol}(\mathcal{M}, g)} (L - m \lambda) \int_{\Omega_{+}} (S - m \lambda) + m^{2} \cdot (\lambda - K)^{2}.
\end{aligned}
\]
On the other hand, by integrating both sides of (\ref{fund-eq1}), we have
\[
0 = \int_{\Omega_{+}} (S - m \lambda) + \int_{\Omega_{-}} (S - m \lambda).
\]
Therefore, 
\[
\begin{aligned}
\frac{1}{\mathrm{vol}(\mathcal{M}, g)} \int_{\mathcal{M}} (\Delta f)^{2} & < \frac{1}{\mathrm{vol}(\mathcal{M}, g)} (L - m \lambda) \int_{\Omega_{-}} (m \lambda - S) + m^{2} \cdot (\lambda - K)^{2} \\
& \leqslant (L - m \lambda) \cdot m \cdot (\lambda - K) + m^{2} \cdot (\lambda - K)^{2}, 
\end{aligned}
\]
and hence, 
\begin{equation}\label{contradict}
\frac{1}{\mathrm{vol}(\mathcal{M}, g)} \int_{\mathcal{M}} (\Delta f)^{2} \rightarrow 0 \quad \mbox{as} \quad K \rightarrow \lambda - 0.
\end{equation}
However, we have
\[
\begin{aligned}
\frac{1}{\mathrm{vol}(\mathcal{M}, g)} \int_{\mathcal{M}} (\Delta f)^{2} & = \frac{2}{\mathrm{vol}(\mathcal{M}, g)} \int_{\mathcal{M}} \mathscr{S}(\nabla f, \nabla f) & & \mbox{(due to (\ref{lem2-eq1}))} \\
& \geqslant 2K \Lambda > 0, 
\end{aligned}
\]
which contradicts (\ref{contradict}) when $K$ is sufficiently close to $\lambda$. This proves Theorem \ref{thm3}.
\end{proof}

\end{document}